\newtheorem{theorem}{Theorem}[section]
\newtheorem{lemma}[theorem]{Lemma}
\newtheorem{corollary}[theorem]{Corollary}
\theoremstyle{definition}
\theoremstyle{remark}
\newtheorem{remark}[theorem]{Remark}
\numberwithin{equation}{section}
\begin{document}

\title{On the fundamental group of complete manifolds with almost Euclidean volume growth}

\author{}
\address{}
\curraddr{}
\email{}
\thanks{}

\author{Jianming Wan}
\address{School of Mathematics, Northwest University, Xi'an 710127, China}
\email{wanj\_m@aliyun.com}
\thanks{}

\subjclass[2010]{53C20}

\date{}

\dedicatory{}

\keywords{nonnegative Ricci curvature, volume growth, fundamental group}

\begin{abstract}
It is proved that the fundamental group of a complete Riemannian manifold with nonnegative Ricci curvature and certain volume growth conditions is trivial or finite.
\end{abstract}
\maketitle



\section{Introduction}
Throughout the paper $M$ denotes a complete noncompact Riemannian $n$-manifold with nonnegative Ricci curvature. Let $V_{p}(r)$ be the volume of the metric ball $B_{p}(r)$ origin at $p$ with radius $r$ in $M$. By Bishop-Gromov volume comparison , $\frac{V_{p}(r)}{\omega_{n}r^{n}}$ is a decreasing function, where $\omega_{n}$ is the volume of unit ball in $\mathbb{R}^{n}$. So the limit is existent as $r$ goes to infinite. Denote the \emph{volume growth} of $M$ by $$\alpha_{M}\coloneqq\lim_{r\rightarrow \infty}\frac{V_{p}(r)}{\omega_{n}r^{n}}.$$ The $\alpha_{M}$ is independent of $p$ and so a global geometric invariant. Moreover, the volume comparison also implies that $0\leq\alpha_{M}\leq1$ and $\alpha_{M}=1$ if and only if $M$ is isometric to $\mathbb{R}^{n}$. We say that $M$ has \emph{Euclidean volume growth} (or large volume growth) if $\alpha_{M}>0$.

The volume comparison theorem implies that $$0<\frac{V_{p}(2r)}{V_{p}(r)}\leq2^{n}$$ for all $r>0$ and Euclidean volume growth condition implies that $$\lim_{r\rightarrow \infty}\frac{V_{p}(2r)}{V_{p}(r)}=2^{n}.$$

The main result of this note is

\begin{theorem}\label{t1.1}
 Given $n$, there is a constant $C(n)<2^{n}$ such that if an open $n$- manifold $M$ satisfies \\
 1)
 \begin{equation}\label{f1.1}
 \frac{V_{p}(2r)}{V_{p}(r)}> C(n)
 \end{equation}
for some $p\in M$ and \textbf{all $r>0$}, then $M$ is simple connected.\\
 2)
\begin{equation}\label{f1.2}
\liminf_{r\rightarrow\infty}\frac{V_{p}(2r)}{V_{p}(r)}>C(n),
\end{equation}
for some $p\in M$, then the fundamental group $\pi_{1}(M)$ is finite.
\end{theorem}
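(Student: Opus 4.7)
The plan is to lift the problem to the universal cover $\pi \colon \tilde M \to M$, whose deck group $\Gamma = \pi_1(M)$ acts freely and isometrically. The cover $\tilde M$ has nonnegative Ricci curvature, so Bishop--Gromov applies to $V_{\tilde p}$ as well. Fix a lift $\tilde p$ of $p$. Two elementary comparisons are key. First, if $L = \inf_{\gamma \ne e} d(\tilde p, \gamma \tilde p)$ is the systole at $\tilde p$, then $\pi|_{B_{\tilde p}(r)}$ is an isometry for $r \le L/2$, so $V_{\tilde p}(r) = V_p(r)$ in that range. Second, counting preimages yields
\[
V_p(r) \le V_{\tilde p}(r) \le N(r)\,V_p(r),
\]
where $N(r) := \#\{\gamma \in \Gamma : d(\tilde p,\gamma\tilde p) < 2r\}$.

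For part (1), assume $\pi_1(M)$ is nontrivial and let $\gamma$ realise $L$. The balls $B_{\tilde p}(L/2)$ and $B_{\gamma\tilde p}(L/2)$ are disjoint subsets of $B_{\tilde p}(3L/2)$, so $V_{\tilde p}(3L/2) \ge 2\,V_p(L/2)$. Combining this packing with $V_p(r) \le V_{\tilde p}(r)$ and Bishop--Gromov on $\tilde M$, the aim is to produce a scale $r_0$ proportional to $L$ at which $V_p(2r_0)/V_p(r_0)$ is bounded above by an explicit constant depending only on $n$ and strictly less than $2^n$. Taking $C(n)$ to be (slightly above) the supremum of such bounds contradicts the hypothesis that the ratio exceeds $C(n)$ at every radius.

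For part (2), if $\pi_1(M)$ were infinite then proper discontinuity of the $\Gamma$-action on $\tilde M$ would force $N(r) \to \infty$. Either via an infinite-order element $\gamma$ and its iterates $\{\gamma^k \tilde p\}$, or (in the pure torsion case) via a short-basis argument, one obtains arbitrarily many pairwise $L$-separated translates of $\tilde p$ inside $B_{\tilde p}(kL)$, hence $k$ disjoint balls $B_{\gamma_i \tilde p}(L/2)$ therein. Combining this with $V_p(r) \ge V_{\tilde p}(r)/N(2r)$ and Bishop--Gromov on $\tilde M$, the packing at arbitrarily large scales should yield $V_p(2r)/V_p(r) \le C(n)$ for unbounded $r$, contradicting the $\liminf$ hypothesis.

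The main obstacle is quantitative: the bare two-ball packing $V_{\tilde p}(3L/2) \ge 2\,V_{\tilde p}(L/2)$ is consistent with $V_{\tilde p}(r) = \omega_n r^n$, so disjoint translates alone cannot beat $2^n$. To force a strict improvement one must additionally account for the triple overlap from $\{e,\gamma,\gamma^{-1}\}$ at scale $\sim L$, together with the rigidity in the Bishop--Gromov equality case, which would force an asymptotic flat-product structure and hence a definite volume deficit at the transition. Converting this deficit into a universal constant $C(n) < 2^n$ depending only on $n$ is the technical heart of the proof.
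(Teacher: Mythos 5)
Your overall strategy---lift to the universal cover and extract a volume-ratio deficit from packing translates of balls---does not close, and you essentially say so yourself: the step of ``converting this deficit into a universal constant $C(n)<2^n$'' is the entire content of the theorem, and neither the triple overlap coming from $\{e,\gamma,\gamma^{-1}\}$ nor an appeal to rigidity in the Bishop--Gromov equality case supplies it. Rigidity gives only a non-quantitative dichotomy; to use it here you would need an effective almost-rigidity statement with an explicit constant depending only on $n$, which is exactly what is missing. As you note, the bare packing inequalities are consistent with $V_{\tilde p}(r)=\omega_n r^n$ at every scale, so they cannot by themselves produce any $C(n)<2^n$; the volume of the ``doubly covered'' region near the midpoint of the short loop could a priori be arbitrarily small. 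The paper obtains the deficit by a different mechanism: Sormani's uniform cut lemma (resting on the Abresch--Gromoll excess estimate) shows that the midpoint $y=\gamma(d/2)$ of a halfway-minimal representative loop of length $d$ satisfies $d_M(y,\sigma)\ge S(n)d$ for every geodesic $\sigma$ issuing from $p$ that is minimal on $[0,d)$, with $S(n)$ explicit. Hence $B_y(S(n)d)$ lies in $B_p(d)$ and is disjoint from Shen's star-shaped set $B_{\Sigma_p(d)}(d)$; Bishop--Gromov bounds $V_y(S(n)d)$ below by $(2S(n)/3)^n V_p(d)$, while Shen's observation $B_p(2d)\setminus B_p(d)\subset B_{\Sigma_p(d)}(2d)\setminus B_{\Sigma_p(d)}(d)$ bounds the annulus by $(2^n-1)V_{\Sigma_p(d)}(d)$. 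Combining gives $V_p(2d)/V_p(d)\le C(n)=(1-(2S(n)/3)^n)(2^n-1)+1<2^n$, which is the quantitative lemma your proposal lacks.

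Part (2) has a second gap even granting the key lemma: ruling out generator loops of length $d_k\to\infty$ (via Sormani's halfway lemma) yields only finite generation, not finiteness. The paper needs a separate argument to finish: condition (\ref{f1.2}) forces $V_p(r)\ge C r^{n-\epsilon}$ with $\epsilon<1$, Anderson's theorem then bounds the polynomial growth of $\pi_1(M)$ by $\epsilon<1$, and an infinite finitely generated group has polynomial growth of order at least $1$, so $\pi_1(M)$ must be finite. Your sketch of ``arbitrarily many $L$-separated translates'' does not address this passage from finitely generated to finite, and in any case it again reduces to the unproved quantitative deficit at large scales.
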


We should note that even though $M$ has Euclidean volume growth, one can not deduce that $M$ is simple connected. So formula (\ref{f1.1}) holding for all $r>0$ is important.

Set $\epsilon(n)=n-\log_{2}\frac{C(n)+2^{n}}{2}$. We see immediately that 2) of Theorem \ref{t1.1} implies the following

\begin{corollary}\label{c1.2}
Given $n$, there is a constant $\epsilon(n)$ such that if an open $n$- manifold $M$ satisfies
\begin{equation}\label{f1.3}
\lim_{r\rightarrow\infty}\frac{V_{p}(r)}{r^{n-\epsilon}}>0,
\end{equation}
for some $p\in M$, $0\leq\epsilon<\epsilon(n)$, then $\pi_{1}(M)$ is finite.
\end{corollary}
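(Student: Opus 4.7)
The plan is to verify that condition (\ref{f1.2}) of Theorem \ref{t1.1} part 2 is a consequence of (\ref{f1.3}); the corollary will then follow immediately by invoking the theorem.

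First I would set $L := \lim_{r\to\infty} V_p(r)/r^{n-\epsilon} > 0$ (a finite positive number under hypothesis (\ref{f1.3})) and rewrite the doubling ratio as
\[
\frac{V_p(2r)}{V_p(r)} \;=\; 2^{n-\epsilon}\cdot\frac{V_p(2r)/(2r)^{n-\epsilon}}{V_p(r)/r^{n-\epsilon}}.
\]
Both numerator and denominator on the right tend to the same positive limit $L$, so the second factor tends to $1$. I would therefore conclude
\[
\lim_{r\to\infty}\frac{V_p(2r)}{V_p(r)} \;=\; 2^{n-\epsilon}.
\]

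Next I would plug in the definition $\epsilon(n) = n - \log_2\frac{C(n)+2^n}{2}$. The hypothesis $\epsilon < \epsilon(n)$ rearranges to $n-\epsilon > \log_2\frac{C(n)+2^n}{2}$, so
\[
2^{n-\epsilon} \;>\; \frac{C(n)+2^n}{2} \;>\; C(n),
\]
the last inequality holding because $C(n)<2^n$ by Theorem \ref{t1.1}. Consequently $\liminf_{r\to\infty} V_p(2r)/V_p(r) > C(n)$, hypothesis (\ref{f1.2}) is satisfied, and part 2 of Theorem \ref{t1.1} yields that $\pi_{1}(M)$ is finite.

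I anticipate no serious obstacle: the whole argument is a one-line algebraic manipulation followed by a direct substitution into the definition of $\epsilon(n)$. The only conceptual point worth flagging is that $\epsilon(n)$ is defined using the midpoint $\frac{C(n)+2^n}{2}$ rather than $C(n)$ itself precisely to supply a strict inequality $2^{n-\epsilon} > C(n)$ with built-in slack; this is also what guarantees $\epsilon(n) > 0$ whenever $C(n) < 2^n$, so the statement of the corollary is nonvacuous.
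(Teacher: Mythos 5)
Your route is exactly the paper's: the paper offers no argument beyond ``we see immediately that 2) of Theorem \ref{t1.1} implies the following,'' and the computation you supply --- writing $V_p(2r)/V_p(r)=2^{n-\epsilon}\bigl(V_p(2r)/(2r)^{n-\epsilon}\bigr)/\bigl(V_p(r)/r^{n-\epsilon}\bigr)$ and unwinding the definition of $\epsilon(n)$ --- is the natural way to fill it in. The algebra $2^{n-\epsilon}>\frac{C(n)+2^{n}}{2}>C(n)$ is correct, as is the observation that $C(n)<2^n$ makes $\epsilon(n)>0$.

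The one step that does not hold up is the parenthetical ``(a finite positive number under hypothesis (\ref{f1.3}))''. Nothing in (\ref{f1.3}) forces the limit $L$ to be finite: since $V_p(r)/r^{n-\epsilon}=r^{\epsilon}\cdot V_p(r)/r^{n}$ and $V_p(r)/r^n\to\omega_n\alpha_M$, any manifold with Euclidean volume growth $\alpha_M>0$ together with any $\epsilon>0$ gives $L=+\infty$, and these are precisely the examples the corollary is meant to subsume. When $L=+\infty$ your key ratio becomes an indeterminate $\infty/\infty$ and the conclusion $V_p(2r)/V_p(r)\to 2^{n-\epsilon}$ no longer follows. Worse, in that case the implication (\ref{f1.3})$\Rightarrow$(\ref{f1.2}) is not a formal consequence of Bishop--Gromov monotonicity alone: one can construct increasing functions $f$ with $f(r)/r^{n}$ decreasing and $f(r)/r^{n-\epsilon}\to\infty$ but $\liminf_{r\to\infty} f(2r)/f(r)=1$ (take $\log f$ piecewise linear in $\log r$ with slope $n-\epsilon/2$ except on sparse flat intervals of length $\log 2$). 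So you must either declare that (\ref{f1.3}) is being read as asserting a \emph{finite} positive limit --- in which case your proof is complete --- or treat the case $L=+\infty$ separately, e.g.\ by the Anderson--Wu route the paper itself mentions: $L=+\infty$ still yields $V_p(r)\ge Cr^{n-\epsilon}$ for large $r$, hence $\pi_1(M)$ is finitely generated with polynomial growth of order $\le\epsilon<1$, hence finite. State explicitly which reading you are using.
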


This shows a gap phenomenon for a well-known result of Peter Li \cite{[L]} and Anderson \cite{[A]} states that  $\pi_{1}(M)$ is finite provided $M$ has Euclidean volume growth.

On the other hand, Anderson has proved that (see Theorem 1.1 in \cite{[A]}) under condition (\ref{f1.3}), every  finitely generated subgroup of $\pi_{1}(M)$ is actually of polynomial growth of order $\leq \epsilon<1$. In \cite{[W]} Bingye Wu proved that under condition (\ref{f1.3}) $\pi_{1}(M)$ is finitely generated.
But every infinite group of finitely generated has  polynomial growth of order at least 1 (I thank the referee for pointing out this fact.  See Section 3). So Corollary \ref{c1.2} is also a consequence of Anderson and Wu's results.

\textbf{Acknowledgment:} I would like to thank the referee for his (her) invaluable suggestions. The referee's explanation clarify some understanding of mine on Anderson's paper \cite{[A]}.

\section{A related volume ratio}
In this section we prove an estimate on the volume ratio $V_{p}(2r)/V_{p}(r)$ related to certain generated elements of $\pi_{1}(M)$ (Lemma \ref{l2.4} below). The main ingredients are Sormani's uniform cut lemma \cite{[So]} and some ideas due to Shen \cite{[Sh]}.

\subsection{A uniform estimate}
Let $g\in\pi_{1}(M,p)$ and $\pi: \widetilde{M}\rightarrow M$ be the universal cover. Following \cite{[So]}, we say that $\gamma$ is a \emph{minimal representative geodesic loop} (based at p) of $g$ if $\gamma=\pi\circ \tilde{\gamma}$, where $\tilde{\gamma}$ is a minimal geodesic connecting $\tilde{p}$ and $g\tilde{p}$. So $L(\gamma)=d_{\widetilde{M}}(\tilde{p}, g\tilde{p})$.

Given a group $G$, we say that $\{g_{1},g_{2},g_{3},\cdot\cdot\cdot\}$ is an \emph{ordered set of independent generators} of $G$ if every $g_{i}$ can not be expressed as the previous generators and their inverses, $g_{1},g_{1}^{-1},\cdot\cdot\cdot,g_{i-1},g_{i-1}^{-1}$.

In \cite{[So]} Sormani proved the following two lemmas.

\begin{lemma}\label{l2.1}
(halfway lemma) There exists an ordered set of independent generators $\{g_{1},g_{2},\cdot\cdot\cdot\}$ of $\pi_{1}(M,p)$ with minimal representative geodesic loops $\gamma_{k}$ of length $d_{k}$ such that $$d_{M}(\gamma_{k}(0),\gamma_{k}(d_{k}/2))=d_{k}/2.$$
In particular, we have a sequence of such generators if $\pi_{1}(M,p)$ is infinitely generated.
\end{lemma}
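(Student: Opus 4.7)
The plan is to construct the generators $g_k$ inductively by a greedy shortest-first procedure, and then to verify the halfway property for each $\gamma_k$ through a cut-and-paste argument at the putative midpoint.

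First I would observe that the deck action of $\pi_1(M,p)$ on the universal cover $\widetilde M$ is free and properly discontinuous, and that $M$ (hence $\widetilde M$) is complete, so closed metric balls in $\widetilde M$ are compact and contain only finitely many points of the orbit $\pi_1(M,p)\cdot\tilde p$; consequently $\{d_{\widetilde M}(\tilde p, g\tilde p): g\in S\}$ attains its minimum for any nonempty $S\subset\pi_1(M,p)\setminus\{1\}$. I would take $g_1$ to be a nontrivial element whose minimal representative geodesic loop has the shortest possible length. Having chosen $g_1,\ldots,g_{k-1}$, if $\pi_1(M,p)=\langle g_1,\ldots,g_{k-1}\rangle$ the procedure halts; otherwise I would select $g_k\in\pi_1(M,p)\setminus\langle g_1,\ldots,g_{k-1}\rangle$ whose minimal representative geodesic loop $\gamma_k$ has the shortest possible length $d_k$. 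By construction, no $g_k$ is a word in $g_1,g_1^{-1},\ldots,g_{k-1},g_{k-1}^{-1}$, so the resulting sequence is an ordered set of independent generators, finite or infinite depending on whether the procedure terminates.

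The heart of the argument is the halfway property for each $\gamma_k$. Suppose for contradiction that $d_M(\gamma_k(0),\gamma_k(d_k/2))<d_k/2$, and let $\sigma$ be a minimizing geodesic from $p$ to the midpoint $\gamma_k(d_k/2)$, so $L(\sigma)<d_k/2$. Define the loops based at $p$
\[
\alpha = \gamma_k|_{[0,d_k/2]}\cdot\bar\sigma, \qquad \beta = \sigma\cdot\gamma_k|_{[d_k/2,d_k]},
\]
each of length strictly less than $d_k$. Because $\bar\sigma\cdot\sigma$ is null-homotopic rel endpoints, one has $g_k=[\alpha]\cdot[\beta]$ in $\pi_1(M,p)$. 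Since $g_k\notin\langle g_1,\ldots,g_{k-1}\rangle$, at least one of $[\alpha]$ and $[\beta]$ also lies outside that subgroup; but then its minimal representative has length at most $L(\alpha)$ or $L(\beta)$, strictly less than $d_k$, contradicting the choice of $g_k$ as a shortest element of $\pi_1(M,p)\setminus\langle g_1,\ldots,g_{k-1}\rangle$.

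The main obstacle is essentially this cut-and-paste step: one must check that $\alpha$ and $\beta$ are genuine loops based at $p$ and that the factorization $g_k=[\alpha][\beta]$ respects the base point. Once that is in place, the halfway property is forced by the minimality built into the greedy selection of $g_k$, and the lemma follows.
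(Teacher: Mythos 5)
Your argument is correct and is essentially Sormani's original proof of the halfway lemma (greedy shortest-first selection of generators outside the subgroup generated so far, followed by the cut-and-paste contradiction at the midpoint); the paper itself gives no proof but simply cites \cite{[So]}, where this is exactly the argument used. No gaps: the attainment of the minimum via discreteness of the orbit, the factorization $g_k=[\alpha][\beta]$, and the observation that at least one factor must lie outside $\langle g_1,\ldots,g_{k-1}\rangle$ are all handled properly.
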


\begin{lemma}\label{l2.2}
(uniform cut lemma) Let $M^{n}$ ($n\geq3$) be a complete manifold with $Ric\geq0$. Let $\gamma$ be a noncontractible geodesic loop based at $p_{0}$ of length $L(\gamma)=2D$ such that
\begin{enumerate}
  \item If $\sigma$ based at $p_{0}$ is a loop homotopic to $\gamma$, then $L(\sigma)\geq2D$;
  \item The $\gamma$ is minimal on $[0,D]$ and $[D,2D]$.
\end{enumerate}
Then there is a constant $S_{n}$ depending on $n$ such that if $x\in\partial B_{p_{0}}(rD)$ where $r\geq 1+2S_{n}$, then
$$d(x,\gamma(D))\geq(r-1)D+4S_{n}D.$$
\end{lemma}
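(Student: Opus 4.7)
My plan is to argue by contradiction in the universal cover $\widetilde{M}$, using the Abresch--Gromoll excess inequality as the essential analytic input. Let $g\in\pi_{1}(M,p_{0})$ correspond to $\gamma$ and lift $\gamma$ to $\tilde{\gamma}:[0,2D]\to\widetilde{M}$ with $\tilde{\gamma}(0)=\tilde{p}_{0}$ and $\tilde{\gamma}(2D)=g\tilde{p}_{0}$. Hypothesis (1) is equivalent to $\tilde{\gamma}$ being a length-minimizing geodesic of length $2D$ in $\widetilde{M}$, so $d_{\widetilde{M}}(\tilde{p}_{0},g\tilde{p}_{0})=2D$; the midpoint $\tilde{q}:=\tilde{\gamma}(D)$ is a lift of $\gamma(D)$. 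A first-variation argument combined with (1) also forces $\gamma$ to be a smooth closed geodesic in $M$, so concatenations of translates $g^{k}\tilde{\gamma}$ are smooth geodesics in $\widetilde{M}$.

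Suppose toward contradiction that some $x\in\partial B_{p_{0}}(rD)$ with $r\geq 1+2S_{n}$ satisfies $d_{M}(x,\gamma(D))<(r-1)D+4S_{n}D$. Lift $x$ to $\tilde{x}$ so that $\tilde{p}_{0}$ is a nearest lift of $p_{0}$ to $\tilde{x}$; then $d_{\widetilde{M}}(\tilde{x},\tilde{p}_{0})=rD$ and $d_{\widetilde{M}}(\tilde{x},h\tilde{p}_{0})\geq rD$ for every deck transformation $h$. Since $d_{M}(x,\gamma(D))=\min_{h}d_{\widetilde{M}}(\tilde{x},h\tilde{q})$, choose $h$ attaining the minimum. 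The two lifts $h\tilde{p}_{0}$ and $hg\tilde{p}_{0}$ each lie at distance $D$ from $h\tilde{q}$, so the triangle inequality squeezes
\[ rD\leq d_{\widetilde{M}}(\tilde{x},h\tilde{p}_{0}),\; d_{\widetilde{M}}(\tilde{x},hg\tilde{p}_{0}) < rD+4S_{n}D. \]
Consequently the excess of $\tilde{x}$ with respect to the minimizing geodesic $h\tilde{\gamma}$ satisfies $e(\tilde{x}):=d_{\widetilde{M}}(\tilde{x},h\tilde{p}_{0})+d_{\widetilde{M}}(\tilde{x},hg\tilde{p}_{0})-2D\geq 2(r-1)D$.

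The contradiction is to come from the Abresch--Gromoll excess inequality, valid in $\widetilde{M}$ since $Ric\geq 0$ lifts to the cover. Applied to $h\tilde{\gamma}$, it produces an upper bound on $e(\tilde{x})$ in terms of $d_{\widetilde{M}}(\tilde{x},h\tilde{\gamma})$ and the endpoint distances; $S_{n}$ is then defined as the largest constant for which that upper bound contradicts the lower bound $2(r-1)D$ whenever $r\geq 1+2S_{n}$. The main obstacle I anticipate is that in the naive configuration $d_{\widetilde{M}}(\tilde{x},h\tilde{\gamma})$ is itself of order $(r-1)D$, which weakens the standard form of Abresch--Gromoll. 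The remedy will likely be either to apply the excess inequality to the half-segment of $h\tilde{\gamma}$ from $h\tilde{p}_{0}$ to $h\tilde{q}$ (where the squeeze above places $\tilde{x}$ genuinely close to the segment), or to extend to a longer concatenation $g^{-k}\tilde{\gamma}\cdot\cdots\cdot g^{k-1}\tilde{\gamma}$ along the closed geodesic $\gamma$ while keeping it minimizing; in either refinement the constant $S_{n}$ depends only on $n$ and comes directly from the Abresch--Gromoll constants.
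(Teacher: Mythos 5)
First, note that the paper itself does not prove this lemma: it is quoted verbatim from Sormani \cite{[So]}, and the paper only records the idea of her proof (lift the loop to the universal cover and apply the Abresch--Gromoll excess estimate \cite{[AG]}). Your setup is consistent with that idea and is correct as far as it goes: hypothesis (1) is indeed equivalent to $d_{\widetilde{M}}(\tilde{p}_{0},g\tilde{p}_{0})=2D$ so that $\tilde{\gamma}$ is minimizing, the choice of the deck transformation $h$ and the squeeze $rD\leq d_{\widetilde{M}}(\tilde{x},h\tilde{p}_{0}),\,d_{\widetilde{M}}(\tilde{x},hg\tilde{p}_{0})<rD+4S_{n}D$ are right, and the resulting lower bound $e(\tilde{x})\geq 2(r-1)D$ on the excess is correct. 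One side remark is wrong, though: minimality of $\gamma$ in its \emph{based} homotopy class does not make $\gamma$ a smooth closed geodesic (it is only a geodesic loop, possibly with a corner at $p_{0}$), so the translates $g^{k}\tilde{\gamma}$ need not concatenate to a smooth, let alone minimizing, geodesic.

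The genuine gap is the final step, which you flag but do not close, and neither of your proposed remedies closes it. Applying Abresch--Gromoll at $\tilde{x}$ is useless because $d_{\widetilde{M}}(\tilde{x},h\tilde{\gamma})$ is itself of order $(r-1)D$; restricting to the half-segment from $h\tilde{p}_{0}$ to $h\tilde{q}$ does not help, since $\tilde{x}$ is still at distance of order $rD$ from that segment; and lengthening the geodesic by concatenating translates both requires the (false) smooth-closed-geodesic claim and destroys the excess lower bound, since the endpoint distances grow with $k$ while $d(\tilde x, g^{\pm k}\tilde p_0)$ does not exceed $rD+O(kD)$. The missing idea is to transfer the excess estimate to a point at controlled distance from $\gamma(D)$: for instance, take $z$ on a minimal geodesic from $\gamma(D)$ to $x$ with $d(z,\gamma(D))=\delta D$ for a small dimensional constant $\delta$. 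The contradiction hypothesis $d(x,\gamma(D))<(r-1)D+4S_{n}D$ together with $d(x,p_{0})=rD$ forces $d(z,p_{0})\geq D+\delta D-4S_{n}D$, hence (lifting $z$ suitably) an excess lower bound of about $2\delta D-8S_{n}D$, while now $d(\tilde{z},h\tilde{\gamma})\leq\delta D$ and the endpoint distances are at least comparable to $D$, so Abresch--Gromoll gives an upper bound of order $\delta^{n/(n-1)}D$. Choosing $\delta$ and then $S_{n}$ small depending only on $n$ yields the contradiction; the case of general $r\geq 1+2S_{n}$ can alternatively be reduced to the borderline radius $r=1+2S_{n}$ by noting that any minimal geodesic from $x$ to $\gamma(D)$ must cross $\partial B_{p_{0}}((1+2S_{n})D)$ and applying the triangle inequality. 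Without some version of this localization, the argument as written does not produce a constant $S_{n}$.
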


The main idea of proof of uniform cut lemma is to lift geodesic loop to the universal covering space and research carefully the related excess function. It contains a nice application of Abresch-Gromoll's estimate on excess function \cite{[AG]}. The above two lemmas allow her to show that Milnor conjecture holds for the manifold with so called small linear diameter growth.

Let $\gamma$ be a minimal representative geodesic loops based at $p$ of $L(\gamma)=d$ satisfying Lemma \ref{l2.1}. The below estimate is important for our purpose.

\begin{lemma}\label{l2.3}
Let $\sigma$ be a geodesic issuing from $p$ such that $\sigma(t)$ is minimal on $[0, d)$. Then there is a constant $S(n)$ such that
$$h \coloneqq d_{M}(\gamma(d/2),\sigma|_{[0,d)})\geq S(n)d.$$
\end{lemma}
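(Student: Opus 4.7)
The approach is to combine Sormani's uniform cut lemma (Lemma~\ref{l2.2}) with the halfway identity $d_{M}(\gamma(0),\gamma(d/2))=d/2$ to quantify how far $\sigma$ must stay from the midpoint of $\gamma$. The proof should split into a ``far'' regime, handled by the ordinary triangle inequality, and a ``near'' regime around $t=d/2$, where one slides along $\sigma$ just far enough to reach the radius at which Lemma~\ref{l2.2} becomes applicable, and exploits the crucial excess term $+4S_{n}D$ to beat the sliding cost.

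First I would verify that $\gamma$ satisfies the hypotheses of Lemma~\ref{l2.2} with $2D=d$. Minimality of $\gamma$ as a representative of a nontrivial $g\in\pi_{1}(M,p)$ (obtained by lifting $\gamma$ to a minimal geodesic in $\widetilde{M}$) forces every based loop homotopic to $\gamma$ to have length at least $d$. Moreover, since $\gamma(d)=\gamma(0)=p$, the halfway identity applied to both endpoints shows that each of the two half-arcs $\gamma|_{[0,d/2]}$ and $\gamma|_{[d/2,d]}$ has length $d/2$ and realizes the distance between its endpoints, so each is a minimal geodesic. Lemma~\ref{l2.2} then supplies a constant $S_{n}>0$, which without loss of generality I take in $(0,1/2)$.

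Next I would fix $t\in[0,d)$, set $x=\sigma(t)$, record $d_{M}(p,x)=t$ (by minimality of $\sigma$) and $d_{M}(p,\gamma(d/2))=d/2$, and bound $d_{M}(x,\gamma(d/2))$ from below by $S_{n}d$ in two cases. If $|t-d/2|\geq S_{n}d$, the triangle inequality at $p$ gives $d_{M}(x,\gamma(d/2))\geq|t-d/2|\geq S_{n}d$ immediately. If $|t-d/2|<S_{n}d$, I would push out along $\sigma$ to $\sigma(t')$ with $t'=(1+2S_{n})(d/2)$; because $S_{n}<1/2$ this $t'$ lies in $[t,d)$, and $\sigma(t')\in\partial B_{p}(rD)$ with $r=1+2S_{n}$. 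Lemma~\ref{l2.2} then yields
$$d_{M}(\sigma(t'),\gamma(d/2))\geq(r-1)D+4S_{n}D=6S_{n}D=3S_{n}d,$$
while minimality of $\sigma$ on $[0,d)$ gives $d_{M}(x,\sigma(t'))=t'-t<2S_{n}d$. A second triangle inequality produces $d_{M}(x,\gamma(d/2))>S_{n}d$. Taking the infimum over $t\in[0,d)$ gives $h\geq S(n)d$ with $S(n)=S_{n}$.

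The main technical point is really just the hypothesis check for Lemma~\ref{l2.2}: one must notice that the halfway property is precisely the ingredient that upgrades the minimality of $\gamma$ as a whole to minimality of each of its halves. After that, the case split is dictated by the width $S_{n}d$ of the window around $t=d/2$ inside which the triangle inequality alone fails, and the widths have been balanced so that the excess term $+4S_{n}D$ in Lemma~\ref{l2.2} dominates the sliding cost $t'-t$, producing the uniform lower bound $S(n)d$.
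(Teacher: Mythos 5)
Your proof is correct, and it uses the same key ingredient as the paper (Sormani's uniform cut lemma, Lemma~\ref{l2.2}) together with triangle inequalities, but the organization is genuinely different. The paper first extracts a single quantity $H=d_{M}(\gamma(d/2),\sigma(d/2))\geq 2S(n)d$ from Lemma~\ref{l2.2}, then splits the domain of $\sigma$ at $t=d/2$ and, for each half, combines the two triangle inequalities $h_{i}\geq |d/2-r_{0}|$ and $h_{i}\geq H-|d/2-r_{0}|$, whose average gives $h_{i}\geq H/2\geq S(n)d$ with no case analysis on the size of $|r_{0}-d/2|$. You instead split into a far regime ($|t-d/2|\geq S_{n}d$, pure triangle inequality at $p$) and a near regime, where you apply the cut lemma at the exact radius $(1+2S_{n})D$ where its hypothesis $r\geq 1+2S_{n}$ holds and pay a sliding cost $t'-t<2S_{n}d$ against the gain $3S_{n}d$. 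Your route has the merit of making explicit a point the paper elides: $\sigma(d/2)$ lies on $\partial B_{p}(D)$, i.e.\ at $r=1<1+2S_{n}$, so the stated Lemma~\ref{l2.2} does not literally apply to it, and the paper's claim $H\geq 2S(n)d$ itself requires exactly the kind of slide-out-and-come-back argument you perform (apply the lemma at $\sigma((1+2S_{n})D)$ and subtract $2S_{n}D$, which indeed yields $H\geq 4S_{n}D=2S_{n}d$). The paper's route, once $H$ is granted, is slightly slicker and yields the same constant. Your verification of the hypotheses of Lemma~\ref{l2.2} (minimality of each half-arc from the halfway identity, and the lower bound on lengths of homotopic loops from minimality of the lift) is also correct and is implicit rather than spelled out in the paper.
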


\begin{proof}
We set $h_{1} \coloneqq d_{M}(\gamma(d/2),\sigma|_{[0,d/2)})$ and $h_{2} \coloneqq d_{M}(\gamma(d/2,\sigma|_{[d/2,d)}).$
By Lemma \ref{l2.2}, we have $$H=d_{M}(\gamma(d/2),\sigma(d/2))\geq 2S(n)d,$$
where $S(n)$ is a universal constant, $S(n)=\frac{n}{n-1}\frac{1}{4}\frac{1}{3^{n}}(\frac{n-2}{n-1})^{n-1}.$

Suppose that $h_{1}=d_{M}(\gamma(d/2),\sigma(r_{0}))$. By the triangle inequality one has $$h_{1}\geq \frac{d}{2}-r_{0}$$
and $$h_{1}\geq H-(\frac{d}{2}-r_{0}).$$ Then $$h_{1}\geq H/2\geq S(n)d.$$
We also note that $d_{M}(\gamma(d/2),\sigma(d))\geq d/2.$ So similarly one has $$h_{2}\geq H/2\geq S(n)d.$$
It follows that $$h=\min (h_{1},h_{2})\geq S(n)d.$$
\end{proof}

\subsection{A volume's ratio}
Continuing with notations $p, d$ in Lemma \ref{l2.3}, we shall prove

\begin{lemma}\label{l2.4}
We have the following ratio of volume
$$\frac{V_{p}(2d)}{V_{p}(d)}\leq (1-(\frac{2S(n)}{3})^{n})(2^{n}-1)+1.$$
\end{lemma}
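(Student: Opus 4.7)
The plan is to identify a ``void'' ball around the midpoint $q=\gamma(d/2)$ that lies inside $B_p(d)$ and is avoided by every radial geodesic from $p$ that extends minimally past the sphere of radius $d$; a direction-wise Bishop--Gromov comparison at $p$ then converts this ``missing'' volume into a genuine improvement over the crude estimate $V_p(2d)/V_p(d)\le 2^n$.

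First I would use the halfway-lemma property $|pq|=d/2$ to conclude $B_p(d)\subset B_q(3d/2)$, hence $V_q(3d/2)\ge V_p(d)$. Bishop--Gromov monotonicity of $V_q(r)/r^n$ at the radii $S(n)d<3d/2$ then delivers
\[
V_q(S(n)d)\ge \Bigl(\tfrac{2S(n)}{3}\Bigr)^{n} V_p(d),
\]
which is the volume lower bound for the void.

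Next I would work in polar coordinates at $p$, writing $V_p(R)=\int_{S^{n-1}_p}\int_0^{\min(\mathrm{cut}(v),R)} J(v,t)\,dt\,dv$. Let $\Omega$ denote the set of directions $v$ with $\mathrm{cut}(v)\ge d$. For $v\in\Omega$ the radial geodesic $t\mapsto\exp_p(tv)$ is minimal on $[0,d)$, so Lemma~\ref{l2.3} places its image outside $B_q(S(n)d)$; consequently the $\Omega$-contribution to $V_p(d)$ is bounded by $V_p(d)-V_q(S(n)d)$. Only directions in $\Omega$ contribute to $V(A)$, where $A=B_p(2d)\setminus B_p(d)$, and for each such $v$ the pointwise Bishop--Gromov inequality (monotonicity of $J(v,t)/t^{n-1}$) yields $\int_d^{2d} J(v,t)\,dt\le (2^n-1)\int_0^d J(v,t)\,dt$. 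Integrating over $\Omega$ gives
\[
V(A)\le (2^n-1)\bigl(V_p(d)-V_q(S(n)d)\bigr)\le (2^n-1)\Bigl(1-\bigl(\tfrac{2S(n)}{3}\bigr)^{n}\Bigr) V_p(d),
\]
and adding $V_p(d)$ produces the claimed ratio.

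The main substantive point is the identification of the void: that every radial geodesic from $p$ which continues minimally to parameter $d$ stays at distance at least $S(n)d$ from $q$ is exactly the content of Lemma~\ref{l2.3}, so no new geometry is required. The only place where one must be careful is verifying that the annulus is fed only by the extending directions $\Omega$ and that the one-dimensional Bishop--Gromov comparison $J(v,t)/t^{n-1}$ monotone may be applied on $[0,\min(\mathrm{cut}(v),2d)]$; both are routine consequences of the cut-locus decomposition under nonnegative Ricci curvature.
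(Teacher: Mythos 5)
Your proof is correct and follows essentially the same route as the paper: the ``void ball'' $B_q(S(n)d)$ with its lower volume bound via $B_p(d)\subset B_q(3d/2)$ is exactly the paper's Claim 1, and your polar-coordinate argument over the set $\Omega$ of directions minimal to radius $d$ is exactly Shen's observation $B_p(2d)\setminus B_p(d)\subset B_{\Sigma_p(d)}(2d)\setminus B_{\Sigma_p(d)}(d)$ combined with the generalized (directional) Bishop--Gromov comparison, i.e.\ the paper's Claim 2. The only difference is notational: the paper packages the direction-wise comparison as a volume inequality for the sets $B_{\Sigma_p(d)}(r)$ rather than integrating the Jacobian $J(v,t)$ explicitly.
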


Before giving the proof of Lemma \ref{l2.4}, (following \cite{[Sh]}) we introduce some necessary notations. Let $\Sigma_{p}$ be a close subset of unit tangent sphere $S_{p}M\subset T_{p}M$. Let $B_{\Sigma_{p}}(r)$ be the set of points $x\in B_{p}(r)$ such that there exists a minimal geodesic $\gamma$ from $p$ to $x$ with $\gamma^{'}(0)\in \Sigma_{p}$. We write $V_{\Sigma_{p}}(r)$ for the volume of $B_{\Sigma_{p}}(r)$.

We denote by $\Sigma_{p}(r)$ the set of unit vectors $v\in S_{p}M$ such that $\gamma(t)=\exp_{p}(tv)$ is minimal on $[0,r)$.

\begin{proof}\emph{of Lemma \ref{l2.4}}.
We write $y=\gamma(d/2)$. Since $h\leq d/2$, we have $B_{p}(d)\supset B_{y}(h)\cup B_{\Sigma_{p}(d)}(d).$
By the definition of $h$, this gives $V_{p}(d)\geq V_{y}(h)+V_{\Sigma_{p}(d)}(d),$
i.e.
\begin{equation}\label{f2.2}
1\geq\frac{V_{y}(h)}{V_{p}(d)}+\frac{V_{\Sigma_{p}(d)}(d)}{V_{p}(d)}.
\end{equation}
\textbf{Claim 1:}
\begin{equation}\label{f2.3}
\frac{V_{y}(h)}{V_{p}(d)}\geq (\frac{2S(n)}{3})^{n}.
\end{equation}
By the Bishop-Gromov comparison theorem, $\mu^{n}V_{p}(r)\geq V_{p}(\mu r)$ for $\mu\geq1$. By Lemma \ref{l2.3} we have $h\geq S(n)d$.
So $$V_{y}(h)\geq V_{y}(S(n)d)\geq (\frac{2S(n)}{3})^{n}V_{y}(3d/2).$$
Since $B_{y}(3d/2)\supset B_{p}(d)$, we have $V_{y}(3d/2)\geq V_{p}(d)$. Thus we obtain (\ref{f2.3}).\\
\textbf{Claim 2:}
\begin{equation}\label{f2.4}
\frac{V_{\Sigma_{p}(d)}(d)}{V_{p}(d)}\geq \frac{1}{2^{n}-1}(\frac{V_{p}(2d)}{V_{p}(d)}-1).
\end{equation}
Following the observation of Shen (c.f. \cite{[Sh]} Lemma 2.4), we see that
$$B_{p}(2r)\setminus B_{p}(r)\subset B_{\Sigma_{p}(r)}(2r)\setminus B_{\Sigma_{p}(r)}(r).$$
Then we have
\begin{eqnarray*}
V_{p}(2r)-V_{p}(r) &\leq & V_{\Sigma_{p}(r)}(2r)-V_{\Sigma_{p}(r)}(r)\\
&\leq& (2^{n}-1)V_{\Sigma_{p}(r)}(r).
\end{eqnarray*}
The second in equality follows from the generalized volume comparison (Lemma 2.2 of \cite{[Sh]}).
Thus $$\frac{V_{\Sigma_{p}(r)}(r)}{V_{p}(r)}\geq \frac{1}{2^{n}-1}(\frac{V_{p}(2r)}{V_{p}(r)}-1).$$

Jointing formulas (\ref{f2.2}), (\ref{f2.3}) and (\ref{f2.4}), we establish the lemma.
\end{proof}

\section{A proof of theorem \ref{t1.1}}

We set $$C(n)=(1-(\frac{2S(n)}{3})^{n})(2^{n}-1)+1.$$

If $\frac{V_{p}(2r)}{V_{p}(r)}> C(n)$ for all $r>0$, then there is no nontrivial generator satisfying Lemma \ref{l2.4}. So $M$ is simple connected. Thus the first part of Theorem \ref{t1.1} is proved.

The proof of second part of Theorem \ref{t1.1} is divided into two steps.

Firstly, $\pi_{1}(M,p)$ is finitely generated. We argue by contradiction. Assume  $\pi_{1}(M,p)$ is infinitely generated, then by Lemma \ref{l2.2}, there is a sequence $\{d_{k}\}$, $d_{k}\rightarrow \infty$ as
$k\rightarrow \infty$ satisfying Lemma \ref{l2.4}, i.e.
$$\frac{V_{p}(2d_{k})}{V_{p}(d_{k})}\leq C(n),$$
for all $k\geq1$. This contradicts to condition (\ref{f1.2}).

Secondly, condition (\ref{f1.2}) implies that $V_{p}(r)\geq C\cdot r^{n-\epsilon}$ for some $\epsilon<1$ and sufficiently large $r$. So by Anderson's result \cite{[A]}, $\pi_{1}(M)$ has polynomial growth of order $\leq \epsilon<1$.

The form of $C(n)$ allows us to write $C(n)=2^{n-\epsilon}, \epsilon<1$. By condition (\ref{f1.2}), there exists $r_{0}>0$, for all $r\geq r_{0}$, one has
$$V_{p}(2r)\geq 2^{n-\epsilon} V_{p}(r).$$
So $$V_{p}(r_{0})\leq (2^{n-\epsilon})^{-1} V_{p}(2r_{0})\leq\cdot\cdot\cdot\leq (2^{n-\epsilon})^{-k} V_{p}(2^{k}r_{0}),$$
$k\in N$. For any $r\geq r_{0}$, we can assume $r\in[2^{k}r_{0},2^{k+1}r_{0}]$ for some $k\in N$. Then
\begin{eqnarray*}
V_{p}(r) \geq  V_{p}(2^{k}r_{0}) &\geq & \frac{V_{p}(r_{0})}{r_{0}^{n-\epsilon}}(2^{k}r_{0})^{n-\epsilon}\\
&\geq & \frac{V_{p}(r_{0})}{r_{0}^{n-\epsilon}}(2^{k}r_{0}\cdot\frac{r}{2^{k+1}r_{0}})^{n-\epsilon}\\
&= & \frac{V_{p}(r_{0})}{(2r_{0})^{n-\epsilon}}r^{n-\epsilon}.
\end{eqnarray*}
It follows that $V_{p}(r)\geq C\cdot r^{n-\epsilon}$ for all $r>r_{0}$.

 An algebraic fact:  If $\Gamma$ is an infinite group with generators $S=\{g_{1},\cdot\cdot\cdot,g_{k}\}$, then $\sharp U(r)\geq r$ for all $r\in N$, where $U(r)$ is the set of elements with word length $\leq r$ with respect to $S$. In particular, $\Gamma$ has polynomial growth of order at least 1. (This proof is provided by referee) To see this we argue by contradiction. Let $r$ be the smallest integer so that $\sharp U(r)< r$, then $r-1 \leq \sharp U(r-1) \leq\sharp U(r)<r$. This shows $U(r) = U(r-1)$. In other words, any word of length $r$ can be expressed as a word of length $\leq r-1$. It follows that $\Gamma=U(r-1)$, which is finite, a contradiction.

The second part of Theorem \ref{t1.1} follows from above immediately.

\begin{remark}
Our proof of finite generation of $\pi_{1}(M)$ is much different to Wu's arguments under condition (\ref{f1.3}).  Wu's proof was based on the estimate of ordered set of independent generators with minimal representative geodesic loops.
\end{remark}

\bibliographystyle{amsplain}

\end{document}